\documentclass[leqno,a4paper]{article}

\usepackage[utf8]{inputenc}
\usepackage{amsmath}
\usepackage{amsthm}
\usepackage{amsfonts}
\usepackage{amssymb}
\usepackage{mathrsfs}
\usepackage{bm}%boldmath
\usepackage[dvips]{graphicx}
\usepackage{psfrag}
\usepackage{subcaption}
\usepackage{color,colordvi}
\usepackage{rotating}
\usepackage{xspace}
\usepackage{euscript}

\usepackage{amscd}
\usepackage{tabularx}
\usepackage{enumerate}

\newcommand{\C}{\mathbb{C}}

\newcommand{\R}{\mathbb{R}}
\newcommand{\N}{\mathbb{N}}

 %coloca prime
 %coloca prime
 %coloca circ
 %coloca circ
 %transposta
 %restricao a um conj
 %operador das ondas
 %operador do calor
 %operador das ondas
 %operador generico
 %Valor Principal
%translacao
%Pullback

 %norma ||  ||
\newcommand{\modulo}[1]{\ensuremath{\left| \, #1 \, \right|}} %modulo
%|  |

%distancia d(#1,#2)

 %convergencia fraca
 %convergencia Forte
%\DeclareMathOperator{\sen}{sin}

\newcommand{\He}{\ensuremath{\mathrm{H}}} %Heaviside

 %vetor
 %matriz
\newcommand{\ee}{\mathrm{e}}

\newcommand{\pardual}[2]{\langle #1 \,,\,  #2 \rangle}

\newcommand{\parentesis}[1]{\left( #1 \right)}

\newcommand{\conjunto}[1]{ \left\{ #1 \right\} }
\newcommand{\tq}{\ensuremath{\, : \,}} % "tal que" em conjuntos

\newcommand{\Sobo}[2]{\ensuremath{\mathrm{H}^{#1}(#2)}} %Sobolev
 %Sobolev Loc
\newcommand{\HU}[1]{\ensuremath{\mathrm{H}^{1}_{0}(#1)}} %H10
\newcommand{\HZ}[2]{\ensuremath{\mathrm{H}^{#1}_{0}(#2)}} %Hm0
\newcommand{\Dlinha}{\ensuremath{{\mathcal{D}}^{\prime}}} %Espaco das
%distribuicoes
 %Espaco das
%distribuicoes
% Espaco S de Schwartz
\newcommand{\Fteste}[1][\R^{n}]{\ensuremath{\mathcal{C}^\infty_c(#1)}} %Espaco
%das funcoes
%teste
\newcommand{\Le}[1]{\ensuremath{\mathrm{L}^{#1}}} %L2, Linfinto, etc
\newcommand{\lle}[1]{\ensuremath{\ell^{#1}}} %l minusculo
\newcommand{\Cont}[1]{\ensuremath{\mathcal{C}^{#1}}} %C1, Linfinto, etc
 %Cc1,
%Linfinto, etc
\newcommand{\Schwartz}{\ensuremath{\mathscr{S}}}% Espaco S de Schwartz
% Espaco S de Schwartz
% Espaco G do seminario
%Distribuicoes com bordo
 %Funcao Teste Ultra Distrib.
 %Seq tendendo a zero.

\newcommand{\doispontos}{ \,:\, } % dois pontos para serem usados em
%uma funcao.

\newcommand{\Fourier}{\ensuremath{\mathscr{F}}}

\newcommand{\diff}{  \ensuremath{\mathrm{d}}   }
\newcommand{\GSol}{  \ensuremath{\mathrm{G}}   }
\newcommand{\sinc}{  \ensuremath{\mathrm{sinc}}   }

%\DeclareMathOperator{\esssup}{ess\,sup}
%\DeclareMathOperator{\codim}{codim}
% \DeclareMathOperator{\lud}{l.u.d.}
% \DeclareMathOperator{\uud}{u.u.d.}
% \DeclareMathOperator{\ud}{u.d.}
%%%%%%%%%%%%%%% FIM ENVIRONMENTS %%%%%%%%%%%

%% \theoremstyle{plain} %% This is the default
\newtheorem{theorem}{Theorem}[section]
\newtheorem{lemma}[theorem]{Lemma}
\newtheorem{corollary}[theorem]{Corollary}

\theoremstyle{definition}
\newtheorem{definition}[theorem]{Definition}

\theoremstyle{remark}
\newtheorem{remark}[theorem]{Remark}

\providecommand{\keywords}[1]
{
	\small	
	\textbf{\textit{Keywords---}} #1
}

\numberwithin{equation}{section}
%AMS
%Uniqueness of trigonometric expansions,uniqueness of Fourier expansions, Riemann theory, localization.
%Classical almost periodic functions, mean periodic functions
%Initial value problems, existence, uniqueness, continuous dependence and continuation of solutions
%Laplace equation, reduced wave equation (Helmholtz), Poisson equation [See also 31Axx, 31Bxx]
%Wave equation
%Waves - Inverse Problems 

%Short title: Almost Periodic Distributions and unique continuation
\title{Uniqueness result for almost periodic distributions depending on time and space and an application to the unique continuation for the wave equation}
\author{Alexandre Kawano\\
University of S\~ao Paulo, S\~ao Paulo, Brazil\\
akawano@usp.br}
\date{ }

%\subjclass[2010]{42A63, 42A75, 34A12, 35J05, 35L05, 74J25}

\begin{document}
\maketitle

\begin{abstract}
	Let $\Omega\subset \R^N$, $N\geq 1$, be an open bounded and connected set with continuous piecewise $\Cont{\infty}$ boundary. 	Here we deal with almost periodic distributions of the form $u(t,x)=\sum_{n=0}^{+\infty} c_n S_n(x) \ee^{i \lambda_n t}$ where $(c_n)_{n\in\N}\subset \C$ belong to the space of slowing growing sequences $s^\prime$, and $(\lambda_n^2)_{n\in\N}\subset \R$ and $(S_n)_{n\in\N}\subset \HZ{1}{\Omega}$ are respectively the eigenvalues and eigenvectors of the Laplacian. Given $\omega\subset\Omega$, we prove that there exists $T_{max}(\Omega,\omega)>0$ depending only on $\Omega$ and $\omega$ such that  if $T>T_{max}(\Omega,\omega)$ and  $u|_{\omega\times ]-T,T[}=0$,  then $u\equiv 0$. Using this result we prove a unique continuation property for the wave equation.
\end{abstract}

\keywords{
	Almost Periodic Distributions; Uniqueness; Laplacian; Unique continuation; Wave equation.
}

\section{Introduction}
\label{Intro}

In \cite{Kawano2011}, almost periodic distributions of the type
\begin{equation}\label{eq:standardForm}
w(t)=\sum_{n=0}^{+\infty} c_n \ee^{i \mu_n t},
\end{equation}
where considered, for $(c_n)_{n\in\N}\subset s^\prime$, the space of slowly growing sequences in $\C$, meaning  there is a $q\in \N$ such that $(n^{-q}\,c_n)_{n\in\N}\in \lle{1}$, and $(\mu_n)_{n\in\N} \subset \R$ such that  there are $n_0\in\N$, $C> 0$ and $\alpha> 0$ satisfying $n\geq n_0\Rightarrow \modulo{\mu_n}\geq C n^\alpha$.

The Theory of Almost Periodic Functions was initiated by  Harald Bohr by the publication of three articles in the volumes 45, 46 and 47 of Acta Mathematica in the years 1925 and 1926. Later H. Bohr exposed his theory in a book \cite{AlmostPeriodicBohr}, and a modern view can be found in  \cite{AlmostPeriodicWaves}. H. Bohr showed that the set of all almost periodic functions is equal to the closure in $\Cont{0}$ of the set of functions
\begin{displaymath}
A=\conjunto{\R_+\ni t\mapsto\sum_{n=1}^N c_n \ee^{i \mu_n t} \tq N\in \N, c_n\in \C, \mu_n\in \R}.
\end{displaymath}

There are other definitions of Almost Periodic Functions and respective spaces, as can be seen for instance in \cite{AlmostPeriodicWaves}.  A very ``small" space is $\mathrm{AP}_1$, whose elements are of the form \eqref{eq:standardForm} with $(c_n)_{n\in\N}\in \lle{1}$. 
 
 Here we consider that the coefficients $c_n$, $n\in\N$, that appear in  \eqref{eq:standardForm} depend on another variable $x$, as detailed below.

Let $\Omega\subset \R^N$, $N\geq 1$, a bounded open connected set, with continuous and piecewise $\Cont{\infty}$ boundary. Consider the eigenvalue problem for $S\in \HZ{1}{\Omega}$, given by
\begin{equation}\label{eq:EigenProblem}
%\begin{cases}
\triangle S=-\lambda^2 S, \quad \text{in $\Omega$}.\\
%\frac{\partial^{2m} S}{\partial \nu^{2m}}=0,\quad m=0,\ldots, \lfloor p/2 \rfloor& \text{on $\partial \Omega$}.
%\end{cases}
\end{equation} 

It is well known that this problem renders a sequence of eigenvalues $(\lambda_n^2)_{n\in\N}$ (for $N\geq 2$, with repeated values according to their geometric multiplicity) with corresponding sequence of eigenvectors $(S_n)_{n\in\N}$. We may suppose that they are normalized so that $\pardual{S_m}{S_n}_{\Le{2}(\Omega)}=1$ if $m=n$ and  $\pardual{S_m}{S_n}_{\Le{2}(\Omega)}=0$ if $m\neq n$. Note that $\pardual{S_m}{S_m}_{\HZ{1}{\Omega}}=\lambda_m^2$. Moreover, by the Weyl's law \cite{Grebenkov2013} \cite{CourantHilbert}, 
\begin{equation}
\lambda^2_{n} \propto \frac{4 \pi^{2}}{\left(\omega_{N} \tilde{\mu}_{N}(\Omega)\right)^{2 / d}} n^{2 / N},
\end{equation}
as $n\rightarrow +\infty$, where $\tilde{\mu}_{N}$ is the Lebesgue measure of $\Omega$ and $\omega_{N}=\frac{\pi^{N / 2}}{\Gamma(N / 2+1)}$.

Consider now the series 
\begin{equation}\label{eq:mainSeries}
u(t,x)=\sum_{n=0}^{+\infty} a_n S_n(x) \ee^{i \lambda_n t},
\end{equation}
with $(a_n)_{n\in\N}\subset s^\prime$, so that there is a $q\in \N$ such that $(n^{-q}\,a_n)_{n\in\N}\in \lle{1}$; $(\lambda_n)_{n\in\N}$ and $(S_n)_{n\in\N}$ are related to the eigenproblem \eqref{eq:EigenProblem}. To state it clearly, here we take $\lambda_n=\sqrt{\lambda_n^2}>0$. As it is going to be clear in Corollary \ref{teo:CorolarioTodosLambdas}, it is immaterial if we take the positive or negative real square root of $\lambda_n^2$. 

Since for fixed $\Omega$, there is a $K_{\Omega}>0$ such that $|S_n|> K_{\Omega} \lambda^2_n$ \cite{Grebenkov2013}, for any $x\in\Omega$, $u(\cdot,x)$ is a distribution lying in $\Schwartz^\prime$, that is, in the Schwartz space of tempered distributions, interpreted in the following way
\begin{equation}\label{eq:DefU}
(u(\cdot,x),\phi)=\sum_{n\in\N}\frac{a_n (-1)^p S_n(x)}{(i\,\lambda_n)^p}\int_\R \phi^{(p)}(\xi)\, \ee^{i \lambda_n \xi} \, \diff \xi, \quad \forall \phi\in \Schwartz,
\end{equation}
where $p\in\N$ is any number such that $p \geq 2+q N$. On the other hand, for each $t\in \R$, $u$ is a distribution in $\Dlinha(\Omega)$, in the following sense
\begin{displaymath}
\pardual{u(t,\cdot)}{\varphi}=\sum_{n=0}^{+\infty} a_n \pardual{S_n}{\varphi} \ee^{i \lambda_n^p t}, \, \forall \varphi \in \Fteste[\Omega],
\end{displaymath}
whose absolute convergence is guaranteed because $(\pardual{S_n}{\varphi})_{n\in \N}$ decreases faster than any polynomial and $(a_n)_{n\in\N}\subset s^\prime$.

Conditions for the unique and non-unique determination of the series \eqref{eq:standardForm} were analyzed in \cite{Kawano2011}. Now, we explore conditions where neither the information regarding the $t$-variable gathered in the set $\{ (u(x,\cdot),\varphi) : \varphi\in \Fteste[{]-T_0,T_0[}]\}$, nor in the set $\{ (u(\cdot, t),\varphi) : \varphi\in \Fteste[\Omega]\}$  regarding the $x$ variable is individually sufficient for the unique determination of the coefficients in \eqref{eq:mainSeries}, but suitable combinations of both information are.

The main result, with is obtained by the use of spherical means \cite{JohnPlaneWaves}  \cite{Kawano2013}, is stated and proved in section \ref{sec:Uniqueness}. As an application of the the results obtained here concerning the unique determination of  \eqref{eq:mainSeries}, we prove a unique continuation property for the wave equation in section \ref{sec:ApplicationWave}.

\section{The main result}\label{sec:Uniqueness}

We briefly give three simple definitions before stating our main result, which is Theorem \ref{theo:Main}.
 
\begin{definition}
	Given two points $P_1, P_2 \in \Omega\subset \R^N$, $N\in \N$, $\Omega$ bounded open and connected, using the euclidean metric, let $gd(P_1,P_2)$ be the geodesic distance between them considering only paths contained in $\Omega$, as illustrated in figure \ref{fig:Geodesic}. 
\end{definition}

\begin{figure}
	\centering
	\psfrag{1}{$P_1$}
	\psfrag{2}{$P_2$}
	\includegraphics[width=0.25\linewidth]{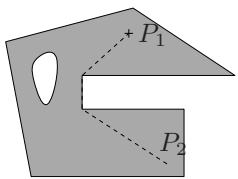}
	\caption{Geodesic distance}
	\label{fig:Geodesic}
\end{figure}

\begin{definition}
	Given a point $P \in \Omega$ and a non empty set $\Omega_0\subset \Omega$, the geodesic distance $gd(P,\Omega_0)$ is defined as
	\begin{displaymath}
	gd(P,\Omega_0)=\inf\{gd(P,Q) : Q\in \Omega_0 \}.
	\end{displaymath}
\end{definition}

\begin{definition}
	Given a non empty set $\Omega_0\subset \Omega$, $T_{max}(\Omega, \Omega_0)$ is defined as
	\begin{displaymath}
	T_{max}(\Omega,\Omega_0)=\sup\{gd(P,\Omega_0) : P\in \Omega\}.
	\end{displaymath}
\end{definition}

The result we seek to prove is the following theorem.
\begin{theorem}\label{theo:Main} 
	Let $\Omega\subset \R^N$, $N\geq 1$, be an open bounded and connected set with a continuous and piecewise $\Cont{\infty}$ boundary. Given any open set $\omega\subset\Omega$, if $T>T_{max}(\Omega,\omega)$ and  $u|_{\omega\times ]-T,T[}=0$, for $u$ given by \eqref{eq:mainSeries}, then $u\equiv 0$.
\end{theorem}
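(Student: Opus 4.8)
The plan is to use that $u$ solves the wave equation and to propagate the vanishing of $u$ out of $\omega$ by means of spherical means. First observe that each term $a_n S_n(x)\ee^{i\lambda_n t}$ is annihilated by $\partial_{tt}-\triangle$, since $\triangle S_n=-\lambda_n^2 S_n$; hence $u$ solves $\partial_{tt}u-\triangle u=0$ in $\Omega\times\R$ (in the distributional sense of \eqref{eq:DefU}), with $u(t,\cdot)\in\HU{\Omega}$ for each $t$. Because $(a_n)_{n\in\N}\in s^\prime$, $u$ is only a distribution, so before applying classical identities I would regularize in time: for $\chi\in\Fteste[{]-s_0,s_0[}]$ set $u_\chi(t,x)=\int_\R u(t-s,x)\chi(s)\,\diff s=\sum_{n}a_n\widehat\chi(\lambda_n)S_n(x)\ee^{i\lambda_n t}$. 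Since $(a_n\widehat\chi(\lambda_n))_{n\in\N}$ is rapidly decreasing, $u_\chi\in\Cont{\infty}(\Omega\times\R)$, it again solves the wave equation, and if $u|_{\omega\times]-T,T[}=0$ then $u_\chi$ vanishes on $\omega\times{]-(T-s_0),T-s_0[}$. Choosing $s_0<T-T_{max}(\Omega,\omega)$, it suffices to prove $u_\chi\equiv0$; letting $\chi$ range over a family with $\widehat\chi(\lambda_n)\neq0$ then gives $a_n=0$ for every $n$, i.e. $u\equiv0$. So I may assume from now on that $u$ is smooth in $\Omega\times\R$, that $T>T_{max}(\Omega,\omega)$, and that $u|_{\omega\times]-T,T[}=0$.

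The engine is \'Asgeirsson's mean value theorem: as $u$ solves $\partial_{tt}u=\triangle_x u$, for every ball with $\overline{B(x_0,r)}\subset\Omega$ and every $t_0\in\R$,
\[
\frac{1}{\left|\partial B(x_0,r)\right|}\int_{\partial B(x_0,r)} u(t_0,x)\,\diff S(x)
=\tfrac12\bigl(u(t_0+r,x_0)+u(t_0-r,x_0)\bigr).
\]
Hence, whenever $x_0\in\omega$ and $|t_0|+r<T$ (so that $t_0\pm r\in{]-T,T[}$, where $u(\cdot,x_0)=0$), the spherical integral of $u(t_0,\cdot)$ over $\partial B(x_0,r)$ vanishes. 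Combining this with the local injectivity of the spherical mean transform — if the integrals of a function over all sufficiently small spheres centred in a fixed open subset of $\Omega$ vanish, then the function vanishes near that open set — one concludes $u(t_0,\cdot)=0$ on a neighbourhood in $\Omega$ of $\omega$, of radius controlled by the remaining ``time budget'' $T-|t_0|$. Iterating this along a chain of balls contained in $\Omega$ that follows a path from $\omega$ to a prescribed point $P\in\Omega$ — each step consuming an amount of budget equal to the radius used — one forces $u(t_0,P)=0$ provided the path can be chosen of length $<T-|t_0|$. Since the infimum of admissible path lengths is exactly $gd(P,\omega)$, and $T>T_{max}(\Omega,\omega)=\sup_{P\in\Omega}gd(P,\omega)$, for every $|t_0|<\delta:=T-T_{max}(\Omega,\omega)$ every $P\in\Omega$ is reached, so $u(t_0,\cdot)\equiv0$ on $\Omega$ for all $|t_0|<\delta$.

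Thus $u$ vanishes on $\Omega\times{]-\delta,\delta[}$; its Cauchy data at $t=0$ therefore vanish, and since (in the regularized, smooth setting) this is an energy solution of the wave equation with Dirichlet boundary condition, uniqueness for the initial--boundary value problem gives $u\equiv0$ on $\Omega\times\R$. Writing $u(t,x)=\sum_n a_n S_n(x)\ee^{i\lambda_n t}$ and pairing, for fixed $t$, with $S_m$ in $\Le{2}(\Omega)$ (legitimate now, the series converging in $\Cont{\infty}$), we get $a_m\ee^{i\lambda_m t}=0$, hence $a_m=0$ for all $m$. Undoing the time regularization as above concludes that the original $u$ is identically zero.

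The main obstacle is the middle step: making rigorous the passage from ``all spherical integrals of $u(t_0,\cdot)$ over spheres centred in $\omega$, of radius $<T-|t_0|$, vanish'' to ``$u(t_0,\cdot)$ vanishes within geodesic distance $T-|t_0|$ of $\omega$''. This requires a local injectivity/support statement for the spherical mean transform adapted to the bounded, only piecewise $\Cont{\infty}$ domain $\Omega$ (so that spheres need not leave $\Omega$), together with a careful chaining argument organised so that the \emph{geodesic} distance $gd$, rather than the Euclidean one, is what emerges, all while keeping track of the regularity introduced by the mollification. The remaining ingredients — the wave-equation identities, finite speed of propagation and energy uniqueness, and the spectral endgame — are standard.
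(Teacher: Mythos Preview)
Your overall architecture is exactly the paper's: regularize, exploit that the spatial spherical mean of a wave solution can be expressed through its time behaviour at the centre, invoke an injectivity result for the spherical mean transform (this is Zalcman's lemma, proved in the paper as Lemma~\ref{teo:AchadoZalcman}, with the $N=1$ case in Lemma~\ref{teo:AchadoZalcmanCasoN1}), and propagate along a polyline from $\omega$ to an arbitrary $P\in\Omega$, consuming time budget equal to the radii used. So the strategy is right, and the ``main obstacle'' you flag is in fact already dispatched by a short argument in the paper.

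The genuine gap is the mean--value identity you call \'Asgeirsson's theorem,
\[
\frac{1}{|\partial B(x_0,r)|}\int_{\partial B(x_0,r)} u(t_0,x)\,\diff S(x)=\tfrac12\bigl(u(t_0+r,x_0)+u(t_0-r,x_0)\bigr).
\]
This is correct only for $N=1$. \'Asgeirsson's theorem equates spherical means in spaces of the \emph{same} dimension; here the time variable lives in $\R$ and the space variable in $\R^N$. If you embed $t\in\R$ into $\R^N$ and apply the genuine \'Asgeirsson identity, the right--hand side becomes a weighted time average
\[
\int_{-r}^{r} u(t_0+s,x_0)\,\theta_{N,r}(s)\,\diff s,
\]
with a kernel supported in $[-r,r]$ (proportional to $(1-(s/r)^2)^{(N-3)/2}$), not the two--point mean. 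Equivalently, from the paper's computation the spatial spherical mean of $u(t_0,\cdot)$ at radius $r$ is $\sum_n a_n S_n(x_0)\,G_N(r\lambda_n)\,\ee^{i\lambda_n t_0}$, while $\tfrac12(u(t_0+r,x_0)+u(t_0-r,x_0))=\sum_n a_n S_n(x_0)\cos(r\lambda_n)\,\ee^{i\lambda_n t_0}$; these agree only when $G_N=\cos$, i.e.\ $N=1$.

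The paper supplies exactly the missing link: it shows $G_N(r\lambda)$ is entire of exponential type $r$ and bounded on $\R$, hence by Paley--Wiener there is a distribution $\theta_{N,r}$ with $\support{\theta_{N,r}}\subset[-r,r]$ and $\pardual{\theta_{N,r}}{\ee^{i\lambda\cdot}}=G_N(r\lambda)$. Pairing the hypothesis $\sum_n a_n K_\varphi(n,x_0)\ee^{i\lambda_n t}=0$ (valid for $t\in{]-T,T[}$) against $\theta_{N,r}$ in the $t_0$--variable then yields the vanishing of the spatial spherical mean for $t_c\in{]-(T-r),T-r[}$, which is precisely what your argument needs. With your two--point formula replaced by this convolution identity (or, equivalently, by the correctly stated \'Asgeirsson theorem), the rest of your proof goes through and coincides with the paper's.
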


For the proof we need several ingredients. 

%%%%%%%%%%%%%
%%%%%%%%%%%%%

It is a well know fact that for any function $h:\R^N\rightarrow \R$, $h\in \Cont{1}$, it holds that
\begin{displaymath}
\frac{\partial}{\partial
	\dot{x}_{i}}\int_{B_{R}(0)}h(\dot{x}+x)\,\diff x=
\int_{B_{R}(0)}\frac{\partial}{\partial
	x_{i}}h(\dot{x}+x)\,\diff x=
\int_{\partial B_{R}(0)}h(\dot{x}+x) \nu_{i}\,\diff \sigma(x),
\end{displaymath}
where $\nu_{i}$ is the $i-th$ component of the normal to $\partial B_{R}(0)$ pointing outside of the ball. It is just Green's Theorem. By using the Stone--Weierstrass approximation Theorem, and uniform convergence in integrals, we can readily see that the above equality is true even for $h\in \Cont{0}$ when we consider only the two extremes. For $N=1$, the ball $B_R(0)$ is just a line segment centered at $x=0$, and the last integral is interpreted as usual: $\int_{\partial B_{R}(0)}h(\dot{x}+x) \nu_{i}\,\diff S_x = h(\dot{x}+R)-h(\dot{x}-R)$.

For technical reasons we  deal  the case $N=1$ separately.

\begin{lemma} \label{teo:AchadoZalcmanCasoN1}
	Let  $D$ an open interval contained in $\R$ and  $h\in \Cont{}(D)$. 
	Suppose that for all $x$ in a a non empty neighborhood $V_{x_0}\subset D$ of $x_0\in D$, and  $\forall \xi >  0$ such that $]x-\xi,x+\xi[ \subset D$, it  is true that
	\begin{equation}\label{eq:MeanN1}
	h(x-\xi)+h(x+\xi )=0.
	\end{equation}
	Then  for each $R>0$ such that $[x_0-R,x_0+R]\subset D$, 
	$h|_{[x_0-R,x_0+R]}=0$.
\end{lemma}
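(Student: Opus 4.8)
The plan is to exploit the functional equation \eqref{eq:MeanN1} to propagate the vanishing of $h$ from a neighborhood of $x_0$ outward along the interval. First I would fix $x_0$ and the neighborhood $V_{x_0}$, and observe that \eqref{eq:MeanN1}, applied with $x$ ranging over $V_{x_0}$ and $\xi$ small, already forces a rigid structure on $h$ near $x_0$: differentiating (or using difference quotients, since $h$ is only continuous) in the relation $h(x-\xi)+h(x+\xi)=0$ with respect to $x$ and with respect to $\xi$ shows that $h(x-\xi)$ and $h(x+\xi)$ are, up to sign, translates of a single function; combined with the symmetry $\xi\mapsto-\xi$ this will show $h$ is locally affine, and then the relation $h(x-\xi)+h(x+\xi)=0=2h(x)$ (taking the formal limit $\xi\to 0$, valid by continuity) forces $h(x)=0$ on $V_{x_0}$. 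So the first step establishes $h\equiv 0$ on some open subinterval $(x_0-\delta,x_0+\delta)$.

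Next I would run a continuation/connectedness argument. Let $A=\{R>0 : [x_0-R,x_0+R]\subset D \text{ and } h|_{[x_0-R,x_0+R]}=0\}$. By the first step $A$ contains an interval $(0,\delta)$, so $A$ is nonempty; it is clearly an interval (downward closed), and by continuity of $h$ it is closed in the relevant range. To see it is also open, suppose $h$ vanishes on $[x_0-R,x_0+R]\subset D$ with $[x_0-R-\eta,x_0+R+\eta]\subset D$ for some $\eta>0$; I want to push the vanishing a bit further. Here is where \eqref{eq:MeanN1} is used with the \emph{center} point $x$ taken near $x_0$ inside $V_{x_0}$ (or more generally where vanishing is already known) and $\xi$ chosen so that one of the endpoints $x\pm\xi$ lands in the already-known zero set while the other endpoint lands in the new territory $(x_0+R, x_0+R+\eta)$: then $h$ at the new point equals $-h$ at a point where $h=0$, hence is $0$. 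Running $x$ over the known zero interval and the corresponding admissible $\xi$ covers a whole new subinterval on each side, so $R+\eta'\in A$ for some $\eta'>0$. Since $\{R : [x_0-R,x_0+R]\subset D\}$ is a (relatively open) interval and $A$ is a nonempty, relatively closed and relatively open subinterval of it containing arbitrarily small $R$, we conclude $A$ exhausts that interval, which gives $h|_{[x_0-R,x_0+R]}=0$ for every admissible $R$, as claimed.

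The main obstacle I anticipate is the bookkeeping in the continuation step: one must check carefully that, for a center $x$ in the current zero set and for suitable $\xi$, \emph{both} constraints ($]x-\xi,x+\xi[\subset D$ and one endpoint in the zero set) can be met while the other endpoint sweeps out a genuinely larger interval — near the right end of $D$ the admissible $\xi$'s shrink, so the gain $\eta'$ at each stage depends on the geometry and must be shown to stay positive as long as we are strictly inside $D$. A secondary technical point is that $h$ is only $\Cont{0}$, so the ``local affineness'' in the first step cannot literally use derivatives; I would instead argue directly from the functional equation (e.g. fixing $\xi_1,\xi_2$ and subtracting instances of \eqref{eq:MeanN1} to get $h(x+\xi_1)-h(x+\xi_2)$ independent of $x$, hence $h$ has the form $\text{affine}$ on the relevant interval) or simply invoke continuity to take $\xi\to 0^+$ in \eqref{eq:MeanN1} and read off $2h(x)=0$ directly on $V_{x_0}$, which is the cleanest route.
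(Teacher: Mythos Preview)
Your proposal is correct. The cleanest route you identify at the end of the first paragraph—letting $\xi\to 0^+$ in \eqref{eq:MeanN1} and using continuity of $h$ to read off $2h(x)=0$ for every $x\in V_{x_0}$—is all that is needed for Step~1; the detour through ``locally affine'' is unnecessary. One small slip in Step~2: your parenthetical ``(or more generally where vanishing is already known)'' is not licensed by the hypothesis, which only asserts \eqref{eq:MeanN1} for centers $x\in V_{x_0}$. Fortunately your argument never actually needs centers outside $V_{x_0}$: with $x\in V_{x_0}$ and $x-\xi$ in the current zero interval $[x_0-R,x_0+R]$, the reflected point $x+\xi=2x-(x-\xi)$ sweeps out an interval reaching up to $x_0+R+2\delta$ (where $(x_0-\delta,x_0+\delta)\subset V_{x_0}$), so each pass of the continuation gains a fixed positive amount while one stays inside $D$. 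The open/closed connectedness argument then goes through exactly as you outline.

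The paper's proof takes a different, one-shot route: it shifts the center by $\xi/2$, applying the hypothesis at $x$ and at $x+\xi/2$ to obtain $h(x-\xi)=-h(x+\xi)$ and $h(x)=-h(x+\xi)$ simultaneously, whence $h(x)=h(x-\xi)=h(x+\xi)$; substituting back into \eqref{eq:MeanN1} forces $h(x)=0$ and hence $h(x\pm\xi)=0$ for every admissible $\xi$. This algebraic trick avoids any iteration, but as written it tacitly uses \eqref{eq:MeanN1} at the center $x+\xi/2$, which need not lie in $V_{x_0}$ when $\xi$ is large; your propagation argument, though longer, handles this issue more carefully. Both approaches rest on the same reflection identity $h(a)=-h(2c-a)$ for $c\in V_{x_0}$, but yours builds the zero set up incrementally while the paper collapses everything in a single step.
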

\begin{proof}
	The proof is by contradiction. Take $x\in V_{x_0}$ and $\xi>0$ such that $]x-\xi,x+\xi[ \subset D$. Suppose that $h(x)\neq 0$. 
	
	By hypothesis, $h(x-\xi)=-h(x+\xi )$. Now, also by hypothesis, $h(x)=h(x+\xi/2-\xi/2)=-h(x+\xi/2+\xi/2)=-h(x+\xi )$.
	
	Therefore, $h(x)=h(x-\xi)$. By symmetry, we also get $h(x)=h(x+\xi)$. But, then $h(x-\xi)+h(x+\xi)=2 h(x)\neq 0$, which contradicts \eqref{eq:MeanN1}.
\end{proof}

Now we deal with the case $N\geq 2$. The next lemma is from  Zalcman \cite{ZalcmanAnalycity}.

\begin{lemma}\label{teo:GreenModif}
	Let  $D\subset \R^N$, $N\geq 2$, be a non empty open connected set.
	Let $h:D \rightarrow \R$ a continuous function in $D$.
	
	Let $\dot{x}=\parentesis{\dot{x}_1, \dot{x}_2, \ldots , \dot{x}_N}\in D$,
	and take $R >  0$ such that $\modulo{x-\dot{x}}\leq R$ implies $x\in D$.
	
	Then 
	\begin{eqnarray*}
		\int_{\partial B_{1}(0)} h(\dot{x}&+&\xi
		R)(\dot{x}_{i}+\xi_{i}
		R)\,\diff \sigma(\xi)=
		\dot{x}_{i}\int_{\partial B_{1}(0)}
		h(\dot{x}+\xi R)\,\diff \sigma(\xi) +\\
		&&\frac{1}{R^{N-2}}\frac{\partial}{\partial
			\dot{x}_{i}}\int_{0}^R \int_{\partial B_{1}(0)}
		h(\dot{x}+\xi r)\,\diff \sigma(\xi)\,r^{N-1}\diff r,  \,	\forall i=1,\ldots, n.
	\end{eqnarray*}
\end{lemma}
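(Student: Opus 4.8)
The plan is to reduce the identity to the Green's-theorem formula recorded immediately before the lemma, together with a passage to polar coordinates. First I would split the weight on the left-hand side by writing $\dot{x}_i + \xi_i R = \dot{x}_i + R\,\xi_i$, which gives
\[
\int_{\partial B_{1}(0)} h(\dot{x}+\xi R)(\dot{x}_i+\xi_i R)\,\diff\sigma(\xi)
= \dot{x}_i \int_{\partial B_{1}(0)} h(\dot{x}+\xi R)\,\diff\sigma(\xi)
+ R \int_{\partial B_{1}(0)} h(\dot{x}+\xi R)\,\xi_i\,\diff\sigma(\xi).
\]
The first summand is exactly the first term on the right-hand side of the claimed identity, so it remains only to show that the second summand, namely $R\int_{\partial B_{1}(0)} h(\dot{x}+\xi R)\,\xi_i\,\diff\sigma(\xi)$, equals the derivative term.

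Next I would recognize the inner double integral on the right as a solid integral over the ball. Writing a generic point $x\in B_R(0)$ in polar coordinates $x=r\xi$ with $r\in(0,R)$ and $\xi\in\partial B_1(0)$, so that $\diff x = r^{N-1}\,\diff r\,\diff\sigma(\xi)$, the standard decomposition of Lebesgue measure yields
\[
\int_{0}^{R}\int_{\partial B_{1}(0)} h(\dot{x}+\xi r)\,\diff\sigma(\xi)\,r^{N-1}\,\diff r
= \int_{B_{R}(0)} h(\dot{x}+x)\,\diff x.
\]
I would then apply the Green's-theorem formula stated just before the lemma (valid for continuous $h$ when only the two extreme members are compared),
\[
\frac{\partial}{\partial \dot{x}_i}\int_{B_{R}(0)} h(\dot{x}+x)\,\diff x
= \int_{\partial B_{R}(0)} h(\dot{x}+x)\,\nu_i\,\diff\sigma(x),
\]
and observe that on the sphere $\partial B_R(0)$ the outward unit normal at $x$ is $x/R$, so $\nu_i = x_i/R$. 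A change of variables $x=R\xi$ with $\xi\in\partial B_1(0)$, for which $x_i=R\,\xi_i$ and $\diff\sigma(x)=R^{N-1}\,\diff\sigma(\xi)$, then gives
\[
\int_{\partial B_{R}(0)} h(\dot{x}+x)\,\frac{x_i}{R}\,\diff\sigma(x)
= R^{N-1}\int_{\partial B_{1}(0)} h(\dot{x}+R\xi)\,\xi_i\,\diff\sigma(\xi).
\]
Multiplying through by $\frac{1}{R^{N-2}}$ turns the prefactor $R^{N-1}$ into $R$, reproducing exactly the second summand isolated in the first step; this completes the identification.

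As for the main obstacle, the only delicate point is the legitimacy of differentiating under the integral sign when $h$ is merely continuous rather than $\Cont{1}$. This is precisely the content of the remark preceding the lemma, where the Stone--Weierstrass approximation combined with uniform convergence of the integrals secures the equality between the first and last members of the Green's-theorem chain for $h\in\Cont{0}$; I would simply invoke that statement rather than reprove it. Everything else is a routine rescaling of the surface measure under $\xi\mapsto R\xi$ and the polar-coordinate factorization of $\diff x$ on the ball, so no further analytic subtlety arises.
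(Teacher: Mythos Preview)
Your proof is correct and follows essentially the same route as the paper's: both rely on the Green's-theorem identity for $\frac{\partial}{\partial\dot{x}_i}\int_{B_R(0)}h(\dot{x}+x)\,\diff x$, the polar-coordinate factorization of the ball integral, and the rescaling $x=R\xi$ of the surface measure. The only cosmetic difference is that you begin by splitting the left-hand side and then identify the second summand, whereas the paper starts from Green's theorem in polar coordinates, divides by $R^{N-2}$, and then adds the $\dot{x}_i$-term to both sides.
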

 
\begin{proof}
	By Green's Theorem,
	\begin{displaymath}
	\frac{\partial}{\partial
		\dot{x}_{i}}\int_{B_{R}(0)}h(\dot{x}+x)\,\diff x=
	\int_{\partial B_{R}(0)}h(\dot{x}+x) \nu_{i}\,\diff \sigma,
	\end{displaymath}
	where $\nu_{i}$  is the $i-th$ component of the normal to $\partial B_{R}(0)$ pointing outside of the ball $\partial B_{R}(0)$.
	
	Writing this equation in polar coordinates and dividing the result by 
	$R^{n-2}$, we obtain 
	\begin{displaymath}
	\frac{1}{R^{N-2}} \frac{\partial}{\partial
		\dot{x}_{i}}\int_{0}^R \int_{\partial B_{1}(0)}
	h(\dot{x}+\xi r)\,\diff \sigma(\xi)\,r^{N-1}\diff r=
	\int_{\partial B_{1}(0)}h(\dot{x}+\xi R) R \xi_{i}\,\diff
	\sigma(\xi).
	\end{displaymath}
	
	Adding to both sides $\dot{x}_{i}\int_{\partial
		B_{1}(0)}h(\dot{x}+\xi R)\,\diff \sigma(\xi)$, we obtain the desired result.
\end{proof}

%%%%%%
%%%%%%
\begin{lemma} \label{teo:AchadoZalcman}
	Let  $D$ and $h$ as in the previous lemma. 
	Suppose that for all $x$ in a a non empty neighborhood $V_{x_0}\subset D$ of $x_0\in D$ and  $\forall R_x >  0$ such that $x+\xi R_x\in D$, 
	$\forall \xi\in \partial B_{1}(0)$ is true that
	\begin{displaymath}
		\int_{\partial B_{1}(0)}h(x+\xi R_x)\,\diff \sigma(\xi)=0.
	\end{displaymath}
Then  for each $R>0$ such that $B_R(x_0)\subset D$, 
	$h|_{B_R(x_0)}=0$.
\end{lemma}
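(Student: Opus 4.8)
The plan is to bootstrap the hypothesis — which asserts that \emph{every} spherical mean of $h$ about \emph{every} centre $x\in V_{x_0}$ vanishes for \emph{all} admissible radii (in particular the large ones, whose spheres sweep across all of $D$) — into the statement that every spherical \emph{moment} of $h$ about such centres vanishes, and then to conclude from the density of polynomials on the sphere that $h$ itself vanishes on all those spheres. Lemma~\ref{teo:GreenModif} is precisely the tool for one step of this bootstrap, and the argument amounts to iterating it with $h$ replaced by $h$ times a coordinate polynomial.

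For $x\in V_{x_0}$, a multi-index $\alpha$, and a radius $r$ with $\overline{B_r(x)}\subset D$, put
\begin{displaymath}
\mu_\alpha(x,r)=\int_{\partial B_1(0)}h(x+r\xi)\,\xi^\alpha\,\diff\sigma(\xi),\qquad
B_\alpha(x,r)=\int_{B_r(x)}h(w)\,(w-x)^\alpha\,\diff w,
\end{displaymath}
so that, passing to polar coordinates, $B_\alpha(x,r)=\int_0^r s^{\,N-1+|\alpha|}\mu_\alpha(x,s)\,\diff s$. I would prove by induction on $|\alpha|$ that $\mu_\alpha(x,r)=0$ for every multi-index $\alpha$, every $x\in V_{x_0}$, and every $r$ with $\overline{B_r(x)}\subset D$. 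The base case $|\alpha|=0$ is exactly the hypothesis. For the inductive step, assuming the claim for all lengths $\le k$, the polar identity gives $B_\beta(x,r)\equiv 0$ on $V_{x_0}$ for $|\beta|\le k$; differentiating $B_\alpha(\cdot,r)$ with $|\alpha|=k$ in $x_i$ — which is legitimate even for merely continuous $h$, since the $x$-dependence of the integrand $h(w)(w-x)^\alpha$ lies entirely in the polynomial factor and the domain is a ball, so the $\Cont{0}$ form of Green's theorem recalled above applies — a short computation gives
\begin{displaymath}
0=\frac{\partial}{\partial x_i}B_\alpha(x,r)=r^{\,N-1+|\alpha|}\,\mu_{\alpha+e_i}(x,r)-\alpha_i\,B_{\alpha-e_i}(x,r),\qquad x\in V_{x_0},
\end{displaymath}
whose second term vanishes by the inductive hypothesis ($|\alpha-e_i|=k-1$); hence $\mu_{\alpha+e_i}(x,r)=0$. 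As $\alpha$ ranges over the multi-indices of length $k$ and $i$ over $1,\dots,N$, the indices $\alpha+e_i$ exhaust those of length $k+1$, closing the induction. (For $|\alpha|=0$ this step is exactly Lemma~\ref{teo:GreenModif}.)

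With all polynomial moments of the continuous function $\xi\mapsto h(x+r\xi)$ on $\partial B_1(0)$ equal to zero, the Stone--Weierstrass theorem (polynomial restrictions are dense in $\Cont{}(\partial B_1(0))$, hence in $\Le{2}(\partial B_1(0))$) forces that function to be identically zero; thus $h\equiv 0$ on $\partial B_r(x)$ for all $x\in V_{x_0}$ and all $r$ with $\overline{B_r(x)}\subset D$. Taking $x=x_0$, letting $r\to 0^+$ gives $h(x_0)=0$, and for any $R$ with $B_R(x_0)\subset D$ and any $y\in B_R(x_0)\setminus\{x_0\}$ the radius $r=|y-x_0|$ satisfies $\overline{B_r(x_0)}\subset D$ and $y\in\partial B_r(x_0)$, whence $h(y)=0$; therefore $h|_{B_R(x_0)}=0$.

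I expect the only delicate point to be the differentiation under the integral sign when $h$ is merely continuous — precisely the situation the $\Cont{0}$ version of Green's theorem was recorded for (alternatively, mollify $h$, note the hypothesis is inherited by $h\ast\phi_\varepsilon$ on a slightly smaller domain, and pass to the limit). The polar-coordinate identity, the recursion, and the multi-index bookkeeping are routine; the conceptual heart is simply that exploiting the vanishing of the spherical means for \emph{all} admissible radii lets the recursion transport the information from the neighbourhood $V_{x_0}$ out to the entire ball $B_R(x_0)$.
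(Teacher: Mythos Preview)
Your proof is correct and follows the same line as the paper's: iterate the Green-type identity of Lemma~\ref{teo:GreenModif} to show that all polynomial-weighted spherical integrals of $h$ about centres in $V_{x_0}$ vanish, then invoke Stone--Weierstrass to force $h$ to vanish on every admissible sphere and hence on $B_R(x_0)$. The only difference is bookkeeping --- the paper multiplies $h$ successively by the coordinate functions $y\mapsto y_i$ and reapplies the lemma verbatim to $h\cdot\mathbb{P}$, whereas you track the spherical moments $\mu_\alpha$ and write out the recursion explicitly --- and the two formulations are equivalent.
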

\begin{proof}
	Lemma 	\ref{teo:GreenModif} implies that 
	$g_i\doispontos D \rightarrow \R$ defined by 
	$g_i(x)=h(x)x_{i}$, $i=1,\ldots, n$,  
	satisfies 
	\begin{displaymath}
		\int_{\partial B_{1}(0)}g_i(x+\xi R)\,\diff \sigma(\xi)=0, \,
		\forall x \in V_{x_0},\, \forall R  >  0, \,
		B_R(x)\subset D.
	\end{displaymath}
	
	Consequently, by induction,
	\begin{displaymath}
		\int_{\partial B_{1}(0)}h(x+\xi R)\,\mathbb{P}(x+\xi
		R)\,\diff \sigma(\xi)=0, \, 
		\forall x \in V_{x_0},\, \forall R  >  0, \,
		B_R(x)\subset D,
	\end{displaymath}
	for any polynomial function $\mathbb{P}$.
	
	By the Stone-Weierstrass Theorem, we can approximate uniformly $h(\cdot)$ over the compact $\partial B_R(x)$ by a sequence of polynomials in 
	$\partial B_{R}(x)$. Therefore,  
	\begin{displaymath}
		\int_{\partial B_{1}(0)}h^2(x+\xi R)\,\diff \sigma(\xi)=0.
	\end{displaymath}
	
	From this, we conclude that $h=0$ on any ball centered at $x\in V_{x_0}$ whose radius $R$ is such that $B_{R}(x)\subset D$.  This concludes the proof.
\end{proof}
%%%%%%
%%%%%%

\begin{lemma}\label{teo:SphericalMeanEigenvector}
	The spherical mean of $S_n$ around the sphere $\partial B_r(x)\subset \Omega$ centered at $x$,  whose radius $r$ is such that $B_{r}(x)\subset \Omega$, 
	\begin{displaymath}
\Phi(x, r) \doteq \frac{1}{N \alpha(N) r^{N-1}} \int_{\partial B_r(x)} S_n(y) \, \diff \sigma(y)
	\end{displaymath}
	is given by the solution of the problem
	\begin{equation}\label{eq:EqDiffPhiEnunciado}
	\begin{cases}
	r \frac{\partial^2\Phi}{\partial r^2}(x,r) + (N-1) \frac{\partial\Phi}{\partial r}(x,r)+ \lambda_n^{2} r \Phi(x,r)=0 & \\
	\Phi(x,0)= S_n(x), &\\
	\frac{\partial\Phi}{\partial r}(x,0)=0. & 
	\end{cases}
	\end{equation}
in the region $\{r\in\R : B_{|r|}(x)\subset \Omega\}$.
\end{lemma}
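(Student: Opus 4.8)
The plan is to obtain \eqref{eq:EqDiffPhiEnunciado} as the classical Darboux spherical-mean equation specialized to an eigenfunction of the Laplacian. First I would note that, by interior elliptic regularity applied to $\triangle S_n=-\lambda_n^2 S_n$, each eigenvector $S_n$ is $\Cont{\infty}$ in $\Omega$; consequently, after the change of variables $y\mapsto x+r\xi$ onto the unit sphere,
\[
\Phi(x,r)=\frac{1}{N\alpha(N)}\int_{\partial B_1(0)} S_n(x+r\xi)\,\diff\sigma(\xi),
\]
the map $r\mapsto\Phi(x,r)$ is smooth on the open set $\{r\in\R : B_{|r|}(x)\subset\Omega\}$, so all the differentiations under the integral sign below are legitimate.

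Next I would compute $\partial_r\Phi$. Differentiating under the integral gives $\partial_r\Phi(x,r)=\frac{1}{N\alpha(N)}\int_{\partial B_1(0)}\nabla S_n(x+r\xi)\cdot\xi\,\diff\sigma(\xi)$; since the outward normal on $\partial B_1(0)$ is $\xi$ and $\nabla_\xi\!\bigl[S_n(x+r\xi)\bigr]=r\nabla S_n(x+r\xi)$, the divergence theorem on $B_1(0)$ together with $\triangle S_n=-\lambda_n^2 S_n$ yields, after undoing the change of variables,
\[
r^{N-1}\frac{\partial\Phi}{\partial r}(x,r)=-\frac{\lambda_n^2}{N\alpha(N)}\int_{B_r(x)} S_n(z)\,\diff z .
\]
Differentiating this identity in $r$ and using $\frac{\diff}{\diff r}\int_{B_r(x)}S_n=\int_{\partial B_r(x)}S_n\,\diff\sigma=N\alpha(N)\,r^{N-1}\Phi(x,r)$ gives $\partial_r\!\bigl(r^{N-1}\partial_r\Phi\bigr)=-\lambda_n^2 r^{N-1}\Phi$; expanding the left-hand side and dividing by $r^{N-2}$ produces exactly the ODE in \eqref{eq:EqDiffPhiEnunciado}. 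The initial conditions are then immediate: $\Phi(x,0)=S_n(x)$ because $\modulo{\partial B_1(0)}=N\alpha(N)$, and $\partial_r\Phi(x,0)=0$ because the displayed formula for $\partial_r\Phi$ carries a factor $r$ (equivalently, $\Phi(x,\cdot)$ is even). For $N=1$ the same computation degenerates to $\Phi(x,r)=\tfrac12\bigl(S_n(x-r)+S_n(x+r)\bigr)$ and $\Phi''=-\lambda_n^2\Phi$, which is \eqref{eq:EqDiffPhiEnunciado} in that case.

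Finally, to cover negative $r$ in the region $\{r\in\R : B_{|r|}(x)\subset\Omega\}$, I would observe that $\Phi(x,r)$ depends on $r$ only through $|r|$, hence is even, and the ODE is invariant under $r\mapsto -r$, so it holds there as well (alternatively, the divergence-theorem computation is valid verbatim for $r<0$). The only genuinely delicate point is the justification of differentiation under the integral sign and of the divergence theorem, i.e.\ the $\Cont{2}$ interior regularity of $S_n$; once that is secured the rest is the standard derivation of Darboux's equation, as in \cite{JohnPlaneWaves}.
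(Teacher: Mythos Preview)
Your proposal is correct and follows essentially the same route as the paper: change variables to the unit sphere, differentiate under the integral, apply the divergence theorem together with $\triangle S_n=-\lambda_n^2 S_n$ to obtain an identity for $r^{N-1}\partial_r\Phi$ in terms of a ball integral of $S_n$, and differentiate once more to reach the ODE and the initial conditions. The only cosmetic differences are that the paper rewrites the ball integral via the co-area formula as $\int_0^r\tilde r^{N-1}\Phi(x,\tilde r)\,\diff\tilde r$ before differentiating, and does not make the interior-regularity justification or the $N=1$ case explicit as you do.
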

\begin{proof}
	Let us take the spherical mean of $S_n$ over the sphere $\partial B_r(x)\subset \Omega$ centered at $x$.
\begin{equation}\label{eq:SphericalMean}
\begin{split}
\Phi(x,r)&= \frac{1}{N \alpha(N) r^{N-1}} \int_{\partial B_r(x)} S_n(y) \, \diff \sigma(y) \\
&= \frac{1}{N \alpha(N)} \int_{\partial B_1(0)} S_n(x+r z) \, \diff \sigma(z).
\end{split}
\end{equation}
Note that this formula is valid for all $r\in\R$ such that $B_{|r|}(x)\subset \Omega$, and for fixed $x$, it is an even function of $r$.

Taking its first derivative with respect to $r$, we obtain
\begin{displaymath}
\frac{\partial\Phi}{\partial r}(x,r)= \int_{\partial B_1(0)} D S_n(x+r z) \cdot z \frac{\diff \sigma(z)}{N \alpha(N)}= \frac{1}{N r^{N-1} \alpha(N)} \int_{B_r(x)} \triangle S_n(y)\, \diff y.
\end{displaymath}

But $\triangle S_n=-\lambda_n^{2} S_n$. Then,
\begin{equation}\label{eq:CondInicial}
\begin{split}
\frac{\partial\Phi}{\partial r}(x,r) &=-\frac{\lambda_n^{2}}{N r^{N-1} \alpha(N)} \int_0^r \int_{\partial B_{\tilde{r}}(x)} S_n \,\diff \sigma_{\tilde{r}} \diff \tilde{r}\\
&= -\frac{\lambda_n^{2}}{r^{N-1}} \int_0^r \tilde{r}^{N-1} \Phi(\tilde{r}) \, \diff \tilde{r}.
\end{split}
 \end{equation}

Therefore we have for fixed $x$,
\begin{displaymath}
r^{N-1} \frac{\partial\Phi}{\partial r}(x,r) + \lambda_n^{2} \int_0^r \tilde{r}^{N-1} \Phi(x,\tilde{r}) \, \diff \tilde{r}=0,
\end{displaymath}
from which we obtain the equation for fixed $x$,
\begin{equation}\label{eq:EqDiffPhi}
\begin{cases}
r \frac{\partial^2\Phi}{\partial r^2}(x,r) + (N-1) \frac{\partial\Phi}{\partial r}(x,r)+ \lambda_n^{2} r \Phi(x,r)=0, & \\
\Phi(x,0)= S_n(x), &\\
\frac{\partial\Phi}{\partial r}(x,0)=0. & 
\end{cases}
\end{equation}

The boundary condition $\Phi^{\prime}(0)=0$ comes from \eqref{eq:CondInicial}. For fixed $x\in\Omega$, the domain of validity of this equation is the region $\{r\in\R : B_{|r|}(x)\subset \Omega\}$. Moreover, from \eqref{eq:SphericalMean}, we know that  $\Phi$ is an even function.
\end{proof}

The solution of \eqref{eq:EqDiffPhi} is given by
\begin{displaymath}
\Phi_n(x,r)=S_n(x) \GSol_{N}(r \lambda_n).
\end{displaymath}
For $N=1, 2, 3$, $\GSol_{N}$ has particularly recognizable forms: $\GSol_{1}(\cdot)=\cos(\cdot)$ (the Cosine function), $\GSol_{2}(\cdot)=J_0(\cdot)$ (the Bessel function of order $0$), $\GSol_{3}(\cdot)=\sinc(\cdot)$ (the Sinc function defined for any $x\in\R$ by $\sinc(x)=\frac{\sin(x)}{x}$). For all other values $N\geq 4$, by using the Frobenius method (see for instance \cite{Frobenius2012}), we obtain that 
\begin{equation}\label{eq:Frobenius}
\GSol_{N}(r \lambda)=1+\sum_{m=1}^{\infty} \frac{(-1)^{m}  (r \lambda)^{2m}  }{2^m (m!) \prod_{k=1}^{m} (N+2(k-1))}.
\end{equation} 
This alternating series represents an entire even function, and by making a comparison with the exponential function,  we see that for $\lambda\in\C$, there exists $C_N>0$ such that 
\begin{displaymath}
|\GSol_{N}(r \lambda)|< C_N \ee^{ |r| \mathrm{|img}( \lambda)|}.
\end{displaymath}
Comparing to a trigonometric series, we also see that for $r\lambda \in \R$, $G_N(r\lambda)$ is bounded in the real line. Then by the Paley-Wiener Theorem, there is a compactly supported distribution $\theta_{N,r}$  in the interval $[-r,r]$ such that for each fixed $r$,  $\Fourier_{\xi}(\theta_{N,r}(\xi))=\GSol_{N}(r \lambda)$. For the cases $N=1,2,3$, we can easily obtain explicitly  $\theta_{N,r}$.

For $N=1$, $\GSol_{1}(r \lambda)=\cos(r\lambda)$, and using a Fourier transform in the variable $\lambda$, we obtain that $\theta_{1,r}=\pi (\delta_{-r}+\delta_r)$. For $N=2$, $\GSol_{2}$ is the Bessel function of order $0$, $J_0$, and 
\begin{equation}\label{eq:DefTesteThetaN2}
\Fourier_{\lambda}(J_0(r \lambda))(\xi)= \frac{\sqrt{2} (1-\He(1-(r/\xi)^2))}{\sqrt{\pi (r^2-\xi^2)}}.
\end{equation}
Observe that the above function is integrable in the interval $]-r,r[$, since the growth of it near $r$ is of the order $\xi^{\frac{1}{2}}$.
For $N=3$, the Fourier transform of the Sinc function in $\R$ also has compact support, and is given by
\begin{equation}\label{eq:DefTesteThetaN3}
\Fourier_{\lambda}(\sinc(r \lambda))(\xi)= \frac{\pi}{r} \chi_{]-r,r[}(\xi),
\end{equation}
and is obviously integrable in the interval $]-r,r[$, and it is also an even function.

Let $\varphi\in \Fteste[B_{\epsilon}(0)]$. We test $S_n$ against $\varphi$ and take the spherical mean of the result around a point $x_0\in\Omega$ as follows
\begin{displaymath}
\Phi_{n,\varphi}(x_0,r)\doteq \frac{1}{N \alpha(N) r^{N-1}}  \int_{\partial B_r(x_0)} \int_{\R^N} S_n(y) \varphi(x-y)\,\diff y \, \diff\sigma(x), 
\end{displaymath}
when $r>0$ and $\epsilon>0$ are such that $B_{r+\epsilon}(x_0)\subset \Omega$. The result is
\begin{equation}
\begin{split}
\Phi_{n,\varphi}(x_0,r)&=- \left[ \int_{\R^N} S_n(x_0-\zeta) \varphi(\zeta)\, \diff\zeta \right]  \, \GSol_{N}(r \lambda_n)\\
&= K_{\varphi}(n,x_0) \GSol_{N}(r \lambda_n), \, N\geq 1,
\end{split}
\end{equation}
where $K_{\varphi}(n,x_0) =- \left[\int_{\R^N} S_n(x_0-\zeta) \varphi(\zeta)\, \diff\zeta \right] $.

%The Bessel function of order zero is  entire  and has the following series representation
%\begin{displaymath}
%J_{0}(\lambda)=\sum_{m=0}^{\infty} \frac{(-1)^{m}}{m ! m !}\left(\frac{\lambda}{2}\right)^{2 m}.
%\end{displaymath}
%From this representation, using the fact that  $\frac{1}{m! m!}\leq \frac{2^{2m}}{(2m)!}$ and by comparing it with the Taylor expansion of the exponential function, we see that for $\lambda\in\C$, there exists $C>0$ such that 
%\begin{displaymath}
%|J_0(r \lambda)|< C \ee^{ r \mathrm{|img}( \lambda)|}.
%\end{displaymath}
%Also, it is a known fact that $J_0$ is a bounded function when restricted to the real line. 

%We will call both functions obtained by Fourier transform,  \eqref{eq:DefTesteThetaN2} and \eqref{eq:DefTesteThetaN3} by the same name, $\theta_r(\xi)$, to avoid the multiplication of the quantity of symbols, because the properties that are important for the proof are that they are, for each $r$ fixed, compactly supported and even.
%\begin{equation}\label{eq:DefTesteTheta}
%\theta_r(\xi)=
%\begin{cases}
% \Fourier_{\lambda}(J_0(r \lambda))(\xi)= \frac{\sqrt{2} (1-\He(1-(r/\xi)^2))}{\sqrt{\pi (r^2-\xi^2)}}, & N=2,\\
% \Fourier_{\lambda}(\sinc(r \lambda))(\xi)= \frac{\pi}{r} \chi_{]-r,r[}(\xi), & N=3.
%\end{cases}
%\end{equation}

Now we are ready for the proof of the main result of this article. 
%Again we deal separately the case $N=1$ separately for technical reasons.

\begin{proof}[(Proof of Theorem \ref{theo:Main})] We deal with all the cases $N\geq 1$ together, making some observations for the case $N=1$ where appropriate.
	
	 Take any arbitrary point $P\in \Omega$. If $P\in \omega$, then $u(P,t)=0$, $\forall t\in]-T,T[$ by hypothesis. Then we may suppose that $P\notin \omega$.  We will prove that there exists $t_P>0$ such that $u(P,t)=0$, $\forall t\in]-t_P,t_P[$. As illustrated in Figure \ref{fig:PolylineN1}, for $T>T_{max}$, there are always points $x_0\in \omega$, $x_1$, ..., $x_M=P$, and radius $t_0$, ..., $t_M$ such that $x_m$, $m=1,...,M$ are centers of balls $B_{t_m}(x_m) $ of diameter $2t_m$, these balls cover the path from $x_0$ to $x_M=P$, and $\sum_{m=0}^{M} t_m < T$. We call this juxtaposition of line segments $x_0x_1\ldots x_M$ a polyline.

	\begin{figure}
	\centering
	\begin{subfigure}[c]{0.6\linewidth}
		\psfrag{1}{$x_0$}
		\psfrag{2}{$P$}
		\psfrag{3}{$B_{T_0}(x_0)$}
		\psfrag{4}{$B_{T_1}(x_1)$}
		\psfrag{w}{$\omega$}
		\includegraphics[width=1.0\linewidth]{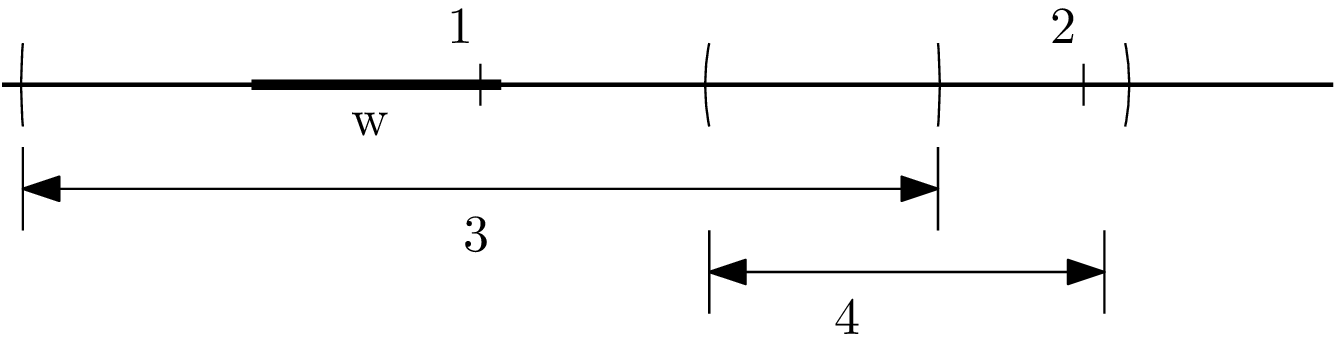}
		\caption{Polyline for $N=1$}
		\label{fig:PolylineN1}
	\end{subfigure}
	~
	\begin{subfigure}[c]{0.35\linewidth}
		\centering
		\psfrag{1}{$P$}
		\psfrag{w}{$\omega$}
		\includegraphics[width=1.0\linewidth]{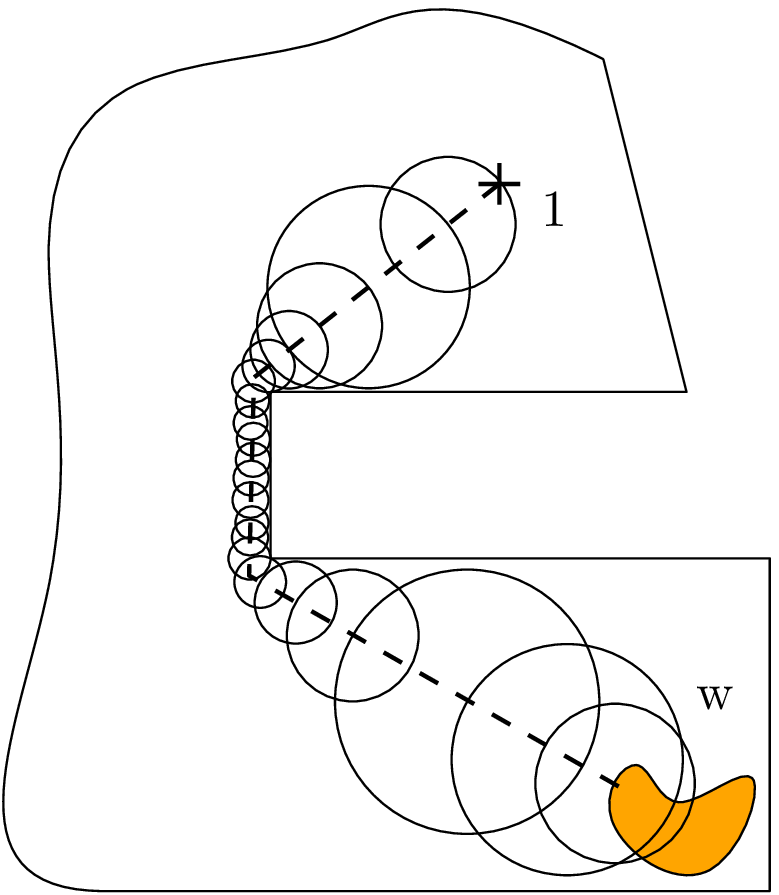}
		\caption{Polyline for $N=2$.}
		\label{fig:PolylineN2}
	\end{subfigure}
	\caption{Polylines}\label{fig:Polyline}
\end{figure}

Select $\epsilon>0$ small so that $B_{\epsilon}(x_0)\subset B_{T_0}(x_0) \cap \omega$ and chose $\varphi\in \Fteste[B_{\epsilon}(0)]$. 

By hypothesis, 	
\begin{equation}\label{eq:HypothesisWithTestFunction}
	\begin{split}
\mathcal{S}(x_0,t) & \doteq  \sum_{n=0}^{+\infty} a_n \pardual{S_n}{\varphi(x_0-\cdot)} \ee^{i \lambda_n t}\\
&= \sum_{n=0}^{+\infty} a_n K_{\varphi}(n,x_0) \ee^{i \lambda_n t}=0, \, \forall t\in]-T,T[.
	\end{split}
\end{equation}
	The change in the order of the integration and summation performed above is justified by the Monotone Convergence Theorem.
	
Now we take the spherical mean of the function $x \mapsto \mathcal{S}(x,t)$ around the sphere $\partial B_{r}(x_0)$, centered at $x_0\in\Omega$ with any radius $r>0$, so that $B_{\epsilon+r}(x_0)\subset B_{T_0}(x_0) \subset\Omega$. It becomes for $N=1$, 
	\begin{equation}\label{eq:SphericalMeanuTestadoN1}
%	\begin{split}
	\frac{1}{2}(\mathcal{S}(x_0-r,t)+ \mathcal{S}(x_0+r,t)) =
	  \sum_{n=0}^{+\infty} a_n K_{\varphi}(n,x_0) \GSol_{1}(r \lambda_n) \ee^{i \lambda_n t}, \, \forall t\in ]-T,T[.
%	\end{split}
	\end{equation}
	For $N\geq 2$, it becomes
		\begin{equation}\label{eq:SphericalMeanuTestadoN23}
	\begin{split}
	&\frac{1}{N \alpha(N) r^{N-1}} \int_{\partial B_{r}(x_0)} \pardual{u(t,\cdot)}{\varphi(x-\cdot)} \, \diff \sigma(y)= \\
	& \quad\quad\quad\quad  \quad\quad\quad\quad = \sum_{n=0}^{+\infty} a_n K_{\varphi}(n,x_0) \GSol_{N}(r \lambda_n) \ee^{i \lambda_n t}, \, \forall t\in ]-T,T[.
	\end{split}
	\end{equation}

	Of course, directly from \eqref{eq:HypothesisWithTestFunction} we also have that $\forall t_0\in]-T_0,T_0[$ and $\forall t_c\in ]-(T-T_0),T-T_0[$,
\begin{equation}\label{eq:FuncamentalSplitSum}
\mathcal{S}(x_0,t_0+t_c)=\sum_{n=0}^{+\infty} a_n K_{\varphi}(n,x_0) \ee^{i \lambda_n (t_0+t_c)}=0.
\end{equation}

For $N=1$, $\forall t_c\in ]-(T-T_0),T-T_0[$,
\begin{displaymath}
0=\frac{1}{2}(\mathcal{S}(x_0,t_0+t_c)+\mathcal{S}(x_0,-t_0+t_c))=\sum_{n=0}^{+\infty} a_n K_{\varphi}(n,x_0) \GSol_{1}(r \lambda_n) \ee^{i \lambda_n t_c}.
\end{displaymath}
That is, the mean in \eqref{eq:SphericalMeanuTestadoN1} is null for $\forall t\in ]-(T-T_0),T-T_0[$.

For $N=1$, there are no repeated eigenvalues in the list $(\lambda_n^2)_{n\in\N}$. But for $N\geq 2$ we must deal with this possibility. 
%Now we divide this part of the proof for $N\geq 2$ into two cases. The first when in the sequence of eigenvalues $(\lambda_n^2)_{n\in\N}$ there are no repeated values, and when there are repeated values.

\begin{description}
	\item[No repeated values in $\bm{(\lambda_n^2)_{n\in\mathbb{\bf N}}}$. ]
	For $N\geq 2$, we continue to regard expression in \eqref{eq:FuncamentalSplitSum} as a function of $t_0$, and test it against $\theta_{N,r}$, which has support contained in $[-r,r]$, $r>0$, so that $B_{\epsilon+r}(x_0)\subset B_{T_0}(x_0) \subset\Omega$. We obtain
\begin{eqnarray}
%	\begin{split}
0=\pardual{\sum_{n=0}^{+\infty} a_n K_{\varphi}(n,x_0) \ee^{i \lambda_n (\cdot +t_c)}}{\theta_{N,r}} &=&\sum_{n=0}^{+\infty} a_n K_{\varphi}(n,x_0) \GSol_N(r \lambda_n) \ee^{i \lambda_n t_c}, \nonumber\\
& & \quad \forall t_c\in ]-(T-T_0),T-T_0[. \nonumber
%	\end{split}
\end{eqnarray}

	Recalling \eqref{eq:SphericalMeanuTestadoN23}, this means that the spherical mean of $x \mapsto\pardual{u(t,\cdot)}{\varphi(x-\cdot)}$ over a sphere centered at $x_0$ with any radius $r>0$ such that $B_{r+\epsilon}(x_0)\subset B_{T_0}(x_0)$ is zero, for $t\in ]-(T-T_0),T-T_0[$. 
	
		Now, by Lemma \ref{teo:AchadoZalcmanCasoN1}  for $N=1$, and by Lemma \ref{teo:AchadoZalcman} for $N\geq 2$, we have that  $x \mapsto\pardual{u(t,\cdot)}{\varphi(x-\cdot)}$ is the null function for $x\in B_{T_0-\epsilon}(x_0)$, for $t\in ]-(T-T_0),T-T_0[$. By shrinking $\epsilon>0$, we conclude that  $x \mapsto\pardual{u(t,\cdot)}{\varphi(x-\cdot)}$ is the null function for $x\in B_{T_0}(x_0)$, for $t\in ]-(T-T_0),T-T_0[$. Since $\varphi\in \Fteste[B_{\epsilon}(0)]$ is arbitrary, we conclude that the restriction of $u$ to the set $ ]-(T-T_0),T-T_0[\times B_{T_0}(x_0)$ is the null distribution, that is,  $u|_{ ]-(T-T_0),T-T_0[\times B_{T_0}(x_0)}=0$. 
		
		For the next step, take $\omega \subset B_{T_0}(x_0)\cap B_{T_1}(x_1)$ and repeat the procedure used above. 
		
		By induction, we proceeding  along the  polyline $x_0x_1\ldots x_m\subset \Omega$, we reach the conclusion that $u|_{ ]-T_M,T_M[\times B_{T_M}(x_M)}=0$. In this case, $t_P=T_M$, $P=x_M$.
	
	This ends the proof for the  case when there are no repeated values in the list $(\lambda_n^2)_{n\in\N}$.

\item[There are repeated values in $\bm{(\lambda_n^2)_{n\in\mathbb{\bf N}}}$.] The number of repetitions of each value in $(\lambda_n)_{n\in\N}$ is their respective geometric multiplicity, which are all finite. The proof for this case is essentially the same. After applying the same steps above to the series
\begin{equation}\label{eq:mainSeriesRepeatedEigen}
u(t,x)=\sum_{n=0}^{+\infty} \left[\sum_{m=1}^{g_n} a_{n,m} S_{n,m}(x)\right] \ee^{i \lambda_n t},
\end{equation} 
where $g_n$ is the geometric multiplicity of the eigenvalue $\lambda_n^2$, we come to the same conclusion that for any $P\in \Omega$, there is a $t_P > 0$ such that $u(P,t)=0$, for $t\in ]-t_P,t_P[$.
\end{description}

For both cases, to finish the proof, fix any $\in ]-t_P,t_P[$ and use the fact the the eigenvectors form a complete Hilbert basis of $\HZ{1}{\Omega}$ to conclude that all coefficients that appear in the distribution $u$ are null.

With this, we finish the proof of our main result.
\end{proof}

\begin{corollary}\label{teo:CorolarioTodosLambdas}
	Given any non empty open set $\omega\subset\Omega\subset \R^N$, $N\geq 1$, if $T>T_{max}(\Omega,\omega)$ and  $u|_{\omega\times ]-T,T[}=0$, for $u$ given by 
	\begin{displaymath}
	u(t,x)=\sum_{n=0}^{+\infty} \left(a_n \ee^{-i \lambda_n t} + b_n \ee^{+i \lambda_n t}\right) S_n(x),
	\end{displaymath}
	 for $(\lambda_n)_{n\in\N}$ and $(S_n)_{n\in\N}$ as in Theorem \ref{theo:Main},  then $u\equiv 0$.
\end{corollary}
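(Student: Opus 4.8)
The plan is to run the proof of Theorem~\ref{theo:Main} essentially unchanged, the single new observation being that the radial profile $\GSol_N$ and, equivalently, the compactly supported distribution $\theta_{N,r}$ (which is $\pi(\delta_{-r}+\delta_r)$ for $N=1$) are \emph{even}: this is already recorded after \eqref{eq:Frobenius} — the series there contains only even powers of $r\lambda$, and the profiles $\cos$, $J_0$, $\sinc$ for $N=1,2,3$ are manifestly even. This parity is precisely what forces the two families of summands $a_n\exp{-i\lambda_n t}S_n(x)$ and $b_n\exp{+i\lambda_n t}S_n(x)$ to transform in the same way under the spherical--mean and $\theta_{N,r}$--testing operations, so that the sign of the frequency never enters; it is the rigorous counterpart of the remark after \eqref{eq:mainSeries} that taking $+\sqrt{\lambda_n^2}$ or $-\sqrt{\lambda_n^2}$ is immaterial. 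I also use that the Dirichlet eigenvalues of $\Omega$ are strictly positive, so $\lambda_n>0$ for every $n$.

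Concretely, I would fix an arbitrary $P\in\Omega$; if $P\in\omega$ there is nothing to prove, so assume $P\notin\omega$ and build, exactly as in Theorem~\ref{theo:Main}, a polyline $x_0x_1\ldots x_M=P$ with $x_0\in\omega$ and covering balls $B_{t_m}(x_m)$ with $\sum_{m=0}^M t_m<T$. Picking $\varphi\in\Fteste[B_\epsilon(0)]$ with $B_\epsilon(x_0)\subset B_{t_0}(x_0)\cap\omega$ and testing $u$ in the space variable against $\varphi(x_0-\cdot)$ yields
\[
\mathcal{S}(x_0,t)\doteq\sum_{n=0}^{+\infty}\bigl(a_n\exp{-i\lambda_n t}+b_n\exp{+i\lambda_n t}\bigr)K_\varphi(n,x_0)=0,\qquad t\in\,]-T,T[.
\]
Taking the spherical mean of $x\mapsto\pardual{u(t,\cdot)}{\varphi(x-\cdot)}$ over $\partial B_r(x_0)$ and using Lemma~\ref{teo:SphericalMeanEigenvector} (whose solution $\Phi_n(x,r)=S_n(x)\GSol_N(r\lambda_n)$ attaches \emph{the same} factor $\GSol_N(r\lambda_n)$ to both the $a_n$-- and the $b_n$--summand), then splitting $t=t_0+t_c$ as in \eqref{eq:FuncamentalSplitSum} and testing in $t_0$ against $\theta_{N,r}$ (for $N=1$ this is the symmetrization $\tfrac12[\mathcal{S}(\cdot,r+t_c)+\mathcal{S}(\cdot,-r+t_c)]$, and this is exactly where the parity of $\theta_{N,r}$ is invoked, since $\pardual{\exp{\pm i\lambda_n(\cdot)}}{\theta_{N,r}}=\GSol_N(r\lambda_n)$ in both cases), I obtain that the spherical mean over $\partial B_r(x_0)$ of $x\mapsto\pardual{u(t_c,\cdot)}{\varphi(x-\cdot)}$ vanishes for every admissible $r>0$ and every $t_c\in\,]-(T-t_0),T-t_0[$. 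Lemma~\ref{teo:AchadoZalcmanCasoN1} ($N=1$) or Lemma~\ref{teo:AchadoZalcman} ($N\geq2$) then forces that function to vanish on $B_{t_0}(x_0)$, so that, $\varphi$ being arbitrary, $u=0$ on $]-(T-t_0),T-t_0[\,\times B_{t_0}(x_0)$. The polyline induction, together with the finite--multiplicity bookkeeping of \eqref{eq:mainSeriesRepeatedEigen} when $(\lambda_n^2)_{n\in\N}$ has repetitions, propagates this exactly as in Theorem~\ref{theo:Main} to a slab $]-\tau,\tau[\,\times B(P)$ around $P$; moreover, since $gd(P,\omega)\le T_{max}(\Omega,\omega)<T$, one may take any fixed $\tau$ with $0<\tau<T-T_{max}(\Omega,\omega)$, independently of $P$.

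Letting $P$ vary over $\Omega$, the balls $B(P)$ cover $\Omega$, hence $u=0$ on $]-\tau,\tau[\,\times\Omega$. As in Theorem~\ref{theo:Main}, completeness of $(S_n)_{n\in\N}$ in $\HZ{1}{\Omega}$ then gives $a_n\exp{-i\lambda_n t}+b_n\exp{+i\lambda_n t}=0$ for every $n$ and every $t\in\,]-\tau,\tau[$; the only point where the present statement differs from Theorem~\ref{theo:Main} is that there are now two coefficients per frequency, but having a whole interval of valid $t$ (rather than a single value) suffices: the left--hand side is analytic in $t$, vanishes on an interval, hence vanishes identically, and since $\lambda_n>0$ this forces $a_n=b_n=0$. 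Thus $u\equiv0$. I do not expect a genuinely new obstacle: the whole task reduces to checking that no step of the proof of Theorem~\ref{theo:Main} distinguishes the sign of $\lambda_n$, which is guaranteed by the evenness of $\GSol_N$ and $\theta_{N,r}$.
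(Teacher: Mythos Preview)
Your proposal is correct and matches the paper's own proof, which simply notes that the argument of Theorem~\ref{theo:Main} goes through unchanged once one observes that $\theta_{N,r}$ is even for $N\geq 2$ and that, for $N=1$, the symmetrization $\tfrac12(\ee^{i\lambda_n t}+\ee^{-i\lambda_n t})=\cos(\lambda_n t)=\GSol_1(\lambda_n t)$. Your additional remarks (the uniform choice of $\tau<T-T_{max}(\Omega,\omega)$ and the final separation of $a_n,b_n$ via analyticity on an interval using $\lambda_n>0$) are helpful clarifications that the paper leaves implicit.
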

\begin{proof}
	The proof follows almost exactly the same as in Theorem  \ref{theo:Main}. It suffices to observe that for $N\geq 2$, $\theta_{N,r}$  is an even function and for $N=1$,  $(\ee^{i \lambda_n t}+\ee^{-i \lambda_n t})/2=\cos(\lambda_n t)=G_1(\lambda_n t)$.
\end{proof}

\section{Application to the unique continuation of solutions of the wave equation}\label{sec:ApplicationWave}
In this section we show two applications involving the wave equation.  

For both applications below, let $\Omega\subset \R^N$, $N\geq 1$, be an open bounded and connected set with continuous piecewise $\Cont{\infty}$ boundary, and $\omega\subset \Omega$ be any non empty set.  Let also $T>T_{max}(\Omega,\omega)$.
 
 Following the notation used in the previous sections, $(S_n)_{n\in\N}$ and  $(S_n/\lambda_n)_{n\in\N}$, forms an orthonormal Hilbert basis for $\Le{2}(\Omega)$ and $\HZ{1}{\Omega}$ respectively. Any distribution in $\Sobo{-1}{\Omega}$ can be expressed as $\sum_{n\in\N} C_n \lambda_n S_n$ with $(C_n)_{n\in\N}\in \lle{2}$, because of the Riesz Representation Theorem. In fact, let $F\in \Sobo{-1}{\Omega}$. By this theorem, there is a unique $f\in \HZ{1}{\Omega}$ such that 
 \begin{displaymath}
 F(\phi)=\pardual{\phi}{f}_{\HZ{1}{\Omega}}=\int_{\Omega}\sum_{n=1}^N \frac{\partial \phi}{\partial x_n}\frac{\partial f}{\partial x_n} \,\diff x=
 -\pardual{\phi}{\triangle f}_{\Le{2}(\Omega)}.
 \end{displaymath}
 Now, $f=\sum_{n\in\N} C_n \frac{S_n}{\lambda_n}$ for $(C_n)_{n\in\N}\in \lle{2}$, and $\triangle S_n=-\lambda_n^2$. Then,
 \begin{displaymath}
 F(\phi)= \pardual{\phi}{\sum_{n\in\N} C_n \lambda_n S_n}_{\Le{2}(\Omega)},
 \end{displaymath}
 showing that indeed $F\in \Sobo{-1}{\Omega}$ can be represented as $\sum_{n\in\N} C_n \lambda_n S_n$ with $(C_n)_{n\in\N}\in \lle{2}$.
 
 \begin{description}
 	\item[First problem.]  Several researchers have analyzed the problem of unique continuation for the solutions of the wave equation, starting from Ruiz  \cite{RuizWave92}. The scientific development and recent results concerning this problem can be found in  \cite{Bosi2018} \cite{Joly2013} and \cite{Joly2019}.  More related to the applications we are going to show in this section, we mention  \cite[Theorem 6.1]{Joly2019} (also see \cite[Corollary 3.2]{Joly2013}). 
 	
  Consider the problem concerning the wave equation 
\begin{equation}\label{eq:waveProb1}
\left\{
\begin{array}{ll}
	\partial_{t}^{2} w(x, t)-\Delta w(x, t)=0 & {\text { in } ]-T,T[ \times\Omega}, \\ 
	w(x, t)=0 & {\text { on } [0,+\infty)\times\partial\Omega}, \\ 
	w(x, 0)=w_0 & {\text { in } \Omega},\\
	\partial_{t} w(x, 0)=v_0 & {\text { in } \Omega},
\end{array}
\right.
\end{equation} 
for $w_0\in \HZ{1}{\Omega}$ and $v_0\in\Le{2}(\Omega)$. Below in the observation \ref{obs:Regularity} we comment about the possibility of using more irregular spaces.

Clearly, the initial conditions $w_0\in \HU{\Omega}$ and $v_0\in\Le{2}(\Omega)$ can be represented by $w_0=\sum_{n\in\N} \frac{A_n}{\lambda_n} S_n$ with $(A_n)_{n\in\N}\in \lle{2}$ and  $v_0=\sum_{n\in\N} B_n  S_n$, for $(B_n)_{n\in\N}\in \lle{2}$.

The unique solution of  \eqref{eq:waveProb1} with $w\in \Le{2}(]-T,T[,\HZ{1}{\Omega})$, $\partial_t w \in \Le{2}(]-T,T[,\Le{2}(\Omega))$ and $\partial_{tt}w  \in  \Sobo{-1}{\Omega})$ is given by
\begin{displaymath}
w(t,x)=\sum_{n=0}^{+\infty} \frac{1}{\lambda_n} \left[A_n \cos(\lambda_n t)  + B_n  \sin(\lambda_n t) \right] S_n(x),
\end{displaymath}
that can be rewritten as
\begin{equation}\label{eq:solProb1}
w(t,x)=\sum_{n=0}^{+\infty} \left(a_n \ee^{-i \lambda_n t} + b_n \ee^{+i \lambda_n t}\right) S_n(x),
\end{equation}
where $b_n=\frac{1}{2 \lambda_n}\left(A_n -i B_n \right)$  and $a_n=\frac{1}{2 \lambda_n}\left(A_n+i B_n \right)$ for $n\in\N \cup \{\ 0 \}$.

Now, a simple application of Corollary \ref{teo:CorolarioTodosLambdas} shows that  $w\equiv 0$. In this particular example, we can also conclude that $w_0$ and $v_0$ are both determined uniquely by the data $w|_{]-T, T[\times\omega}$. This conclusion is in fact valid for for any $\omega$ such that $T>T_{max}(\Omega,\omega)$.

\begin{remark}\label{obs:Regularity}
Note that by using the interpretation provided by \eqref{eq:DefU}, the problem can be stated in more irregular spaces. For instance, $w_0\in \Le{2}(\Omega)$ and $v_0\in\Sobo{-1}{\Omega}$. In this case, the solution $w\in \Le{2}(]-T,T[,\Le{2}(\Omega))$ would be represented as in \eqref{eq:solProb1} but with $a_n=\frac{1}{2}\left(A_n+i B_n \right)$ and $b_n=\frac{1}{2}\left(A_n -i B_n \right)$, $\forall n\in\N \cup \{\ 0 \}$. 
The boundary conditions still possess a meaning via \eqref{eq:DefU}.
\end{remark}

\item[Second problem. ] Consider the problem
\begin{equation}\label{eq:waveProb2}
\left\{
\begin{array}{ll}
\partial_{t}^{2} w(x, t)-\Delta w(x, t)=g(t)\, f(x) & {\text { in } ]0,T[ \times\Omega}, \\ 
w(x, t)=0 & {\text { on } [0,+\infty)\times\partial\Omega}, \\ 
w(x, 0)=\partial_{t} w(x, 0)=0 & {\text { in } \Omega},
\end{array}
\right.
\end{equation} 
for $g\in\Cont{1}([0,T[)$, $g(0)\neq 0$, $f\in\Sobo{-1}{\Omega}$. We are going to show that if $w|_{[0, T[\times\omega}=0$, then $f\equiv 0$. A related result can be found in \cite{Cheng2002}.

The function $f$ can be expressed as $f=\sum_{n\in\N} C_n \lambda_n S_n$, for $(C_n)_{n\in\N}\in \lle{2}$. The solution $w\in \Le{2}(]0,T[,\HZ{1}{\Omega})$ with $\partial_t w \in \Le{2}(]-T,T[,\Le{2}(\Omega))$ and $\partial_{tt}w  \in  \Sobo{-1}{\Omega})$ of \eqref{eq:waveProb2} is given by
\begin{displaymath}
w(t,x)=\int_{0}^{t} g(t-\tau) \sum_{n\in \N} C_n \sin(\lambda_n t) S_n(x) \, \diff\tau.
\end{displaymath}

Since $w|_{[0, T[\times\omega}=0$, by taking the first derivative of the last equation with respect to $t$, we obtain the following Volterra equation of the second kind.
\begin{displaymath}
0=g(0) \sum_{n\in \N} C_n \sin(\lambda_n t) S_n(x) + \int_{0}^{t} g^\prime(t-\tau) \sum_{n\in \N} C_n \sin(\lambda_n t) S_n(x) \, \diff\tau,
\end{displaymath}
for all $t\in [0,T[$, from which we conclude that 
\begin{displaymath}
\sum_{n\in \N} C_n \sin(\lambda_n t) S_n(x) =0, \, \forall t\in [0,T[,
\end{displaymath}
which can be extended in a natural way to $]-T,T[$ oddly. Now, an application of  Corollary \ref{teo:CorolarioTodosLambdas} shows that  $w\equiv 0$ and $f\equiv 0$.
 \end{description}

\section{Conclusions} \label{Conclusions}
We proved Theorem \ref{theo:Main}, that concerns an almost periodic distribution in the time variable $t$, given by a series whose coefficients depend on the space variable $x$. This result and possible adaptations have many applications in Mathematical Physics, for example inverse problems involving the plate equation in bounded domains with various boundaries. Two of them was given in section \ref{sec:ApplicationWave}, where we considered the problem of unique continuation for the wave equation.

\section*{Acknowledgments}
The author profoundly thanks Professor Paulo D. Cordaro of Instututo de Matemática e Estatística da Universidade de São Paulo for his constant support and enlightening guidance. 

\bibliographystyle{amsplain}
\bibliography{References}

\end{document}